\theoremstyle{plain}
\newtheorem{thm}{Theorem}[section]
\newtheorem{lem}[thm]{Lemma}
\newtheorem{ques}[thm]{Question}
\newtheorem{cor}[thm]{Corollary}
\theoremstyle{definition}
\newtheorem{de}[thm]{Definition}
\theoremstyle{remark}
\newtheorem{rem}[thm]{Remark}
\numberwithin{equation}{section}
\def \N {\mathbb N}
\def \C {\mathbb C}
\def \Z {\mathbb Z}
\def \X {\mathcal{X}}
\def \d {\delta}
\def \lra{\longrightarrow}
\begin{document}
\title[Multiple Ergodic averages]{almost sure convergence of the multiple ergodic average for certain weakly mixing  systems}

\author{Yonatan Gutman}

\address{Institute of Mathematics, Polish Academy of Science, ul. \'{S}niadeckich 8, 00-656 Warszawa, Poland}
\email{y.gutman@impan.pl}

\author{Wen Huang}
\author{Song Shao}
\author{Xiangdong Ye}

\address{Wu Wen-Tsun Key Laboratory of Mathematics, USTC, Chinese Academy of Sciences and
Department of Mathematics, University of Science and Technology of China,
Hefei, Anhui, 230026, P.R. China.}

\email{wenh@mail.ustc.edu.cn}\email{songshao@ustc.edu.cn}
\email{yexd@ustc.edu.cn}

\subjclass[2010]{Primary: 37A05, 37B05}

\keywords{multiple ergodic average, PID, Rokhlin conjecture}

\thanks{Y. Gutman was partially supported by the the National
Science Center (Poland) grant 2013/08/A/ST1/00275 and by the the National
Science Center (Poland) grant 2016/22/E/ST1/00448. W. Huang, S. Shao and X. Ye are supported by NNSF of China (11371339, 11431012, 11571335, 11225105) and by ``the Fundamental Research Funds for the Central Universities''.
}


\begin{abstract}
The family of pairwise independently determined (PID) systems, i.e. those for which the independent joining is the
only self joining  with independent 2-marginals, is a class of systems for which the long standing open question
by Rokhlin, of whether mixing implies mixing of all orders, has a positive answer.
We show that in the class of weakly mixing PID one finds a positive answer for another long-standing open problem, whether the  multiple
ergodic averages
\begin{equation*}
    \frac 1 N\sum_{n=0}^{N-1}f_1(T^nx)\cdots
f_d(T^{dn}x), \quad N\to \infty,
\end{equation*}
almost surely converge.
\end{abstract}

\maketitle

\section{Introduction}\label{}

Ergodic theory is the study of the qualitative properties of measure preserving transformations. A
quadruple $(X,\X,\mu,T)$ is a measure preserving transformation (m.p.t. for short) if $(X,\X,\mu)$
is a measurable space with $\mu(X)=1$, and $T : X \rightarrow X$ is a m.p.t. That is, for $A\in \X$, $T^{-1}A\in \X$ and $\mu(T^{-1}A)=\mu(A)$.
In this paper, we assume that $T$ is invertible and both $T$ and $T^{-1}$ are m.p.t.
We will use $Tf$ to denote the function $f(Tx)$, i.e. we treat $T$ as a unitary operator.

\medskip

The family of pairwise independently determined (PID) systems, i.e. those for which the independent
joining is the only self-joining  with independent 2-marginals, is a class of systems for which the
long standing open question by Rokhlin, of whether mixing implies mixing of all orders, has a positive
answer. Our goal in this paper is to show that in the class of weakly mixing PID one finds a positive answer for
another long-standing open problem, whether the  multiple
ergodic averages
\begin{equation*}
    \frac 1 N\sum_{n=0}^{N-1}f_1(T^nx)\cdots
f_d(T^{dn}x), \quad N\to \infty,
\end{equation*}
almost surely converge.

\medskip

First let us recall some
results related to the convergence of ergodic averages.
The first pointwise ergodic theorem was proved by Birkhoff in 1931 (\cite{Bir31}).
Following  Furstenberg's beautiful work on the dynamical proof of Szemeradi's theorem in 1977 \cite{F77}, problems concerning the convergence of multiple ergodic averages (in $L^2$ norm or pointwisely) started attracting a lot of attention in the literature.

\medskip

The convergence of the averages
\begin{equation}\label{MEA-one}
    \frac 1 N\sum_{n=0}^{N-1}f_1(T^nx)\ldots
f_d(T^{dn}x)
\end{equation}
in $L^2$ norm was established by Host and Kra \cite{HK05} (see also Ziegler \cite{Z}). The convergence of the multiple ergodic average for commuting
transformations was obtained by Tao \cite{Tao} using the finitary ergodic method, see \cite{Austin,H2009} for
more traditional ergodic proofs by Austin and Host respectively. There is also a proof by Towsner using non-standard analysis (\cite{Tow09}). The convergence of multiple ergodic averages for nilpotent group actions
was proved by Walsh \cite{Walsh}.

\medskip

The first breakthrough on pointwise convergence of (\ref{MEA-one}) for $d > 1$ is due to Bourgain,
who showed in \cite{B90} that for $d = 2$, for $p,q\in \N$ and for
all $f_1, f_2 \in  L^\infty$,
the limit of $\frac{1}{N}\sum_{n=0}^{N-1} f_1(T^{np}x)f_2(T^{nq}x)$ exists a.s.
In \cite{DL1996}, Derrien and Lesigne showed
that the problem of the almost sure convergence of the multiple ergodic averages can be reduced
to the case when the m.p.t. has zero entropy. To be precise,
let $(X,\X,\mu,T)$ be a m.p.t. Let $d\in \N$, and $p_1(n)$, $\ldots$, $p_d(n)$ $\in \Z[n]$. The limit of the multiple ergodic average
\begin{equation*}
        \frac{1}{N} \sum_{n=0}^{N-1} f_1(T^{p_1(n)}x)\ldots f_d(T^{p_d(n)}x)
\end{equation*}
exists a.s. for all $f_1,\ldots, f_d$ in $L^\infty(X,\X,\mu)$ if and only if it exists a.s. for all $f_1,\ldots,f_d$
in $L^\infty(X,\mathcal{P},\mu)$, where $\mathcal{P}$ is the Pinsker factor. In particular, the almost sure convergence of this average holds for K-systems.
Recall that a m.p.t. is a K-system if its Pinsker factor is trivial.

Recently, Huang, Shao and Ye \cite{HSY} showed the a.s. convergence of (\ref{MEA-one}) for distal systems. That is, let $(X,\X,\mu, T)$ be an ergodic measurable distal system,
and $d\in \N$. Then for all $f_1, \ldots, f_d \in L^{\infty}(\mu)$ the averages
\begin{equation*}
    \frac 1 N\sum_{n=0}^{N-1}f_1(T^nx)\ldots f_d(T^{dn}x)
\end{equation*}
converge $\mu$ a.s.
Note that the Furstenberg-Zimmer structure theorem \cite{F,z76a,z76b} states that each ergodic system is a weakly mixing extension of
an ergodic measurable distal system. Thus, by the above theorem the question on the a.s. convergence of (\ref{MEA-one})
can be reduced to the question of how to lift the convergence though weakly mixing extensions. We note that
in \cite{DS15, DS16} Donoso and Sun generalized the above result to commuting distal transformations.

\medskip

Even for weakly mixing systems, the question on the a.s. convergence of (\ref{MEA-one}) still remains open.
A partial answer to this question was obtained by Assani \cite{Assani1998}, who showed that if $(X,\X,\mu,T)$ is a
weakly mixing system such that the restriction of $X$ to its Pinsker algebra has spectral type singular w.r.t Lebesgue measure, then the limit of $(\ref{MEA-one})$ exists a.s.

\medskip

Our main result in this paper can be stated as follows.

\medskip
\noindent {\bf Main Theorem:} {\it
Let $(X,\mathcal{X},\mu,T)$ be a weakly mixing and pairwise independently
determined (PID) m.p.t. Then for all $d\in\N$ and all $f_1,\ldots,f_d\in L^\infty(X,\X,\mu)$,}
\begin{equation*}
    \frac 1 N\sum_{n=0}^{N-1}f_1(T^nx)\ldots
f_d(T^{dn}x)\stackrel{a.s.}{\longrightarrow } \int f_1d\mu\int f_2d\mu \ldots \int f_dd\mu , \quad N\to \infty.
\end{equation*}

We also observe that for a generic  m.p.t. $(\ref{MEA-one})$ exists a.s.

\medskip

The paper is organized as follows. In the next section we will discuss Rokhlin's multifold mixing question and pairwise independent
joinings, and explain the connection of this problem with our main theorem. In the final section  we will present the proof of the main
theorem and its corollaries.
For example, by our theorem, one can show that the a.s. convergence of (\ref{MEA-one}) for finite-rank  mixing m.p.t., simple systems etc.

\medskip

There are excellent surveys on multiple recurrence and multiple ergodic averages,  see for
example \cite{Bergelson06, F88, F90, F10, Kra}. And for other progress on the a.s. convergence of
multiple ergodic averages, we refer to \cite{Hou, A2, CF}.

\section{Rokhlin's multifold mixing question and pairwise-independent joinings}\label{sec-4}
Somewhat surprisingly the problem of almost sure convergence is related to another well-known and long-standing question in ergodic theory. We first recall this problem and then discuss the connection.

\subsection{Rokhlin's Multifold Mixing Question}

Rokhlin defined {\em multifold mixing} in \cite{Rokhlin} as follows: a m.p.t. $T$ is said to be {\em $k$-fold mixing}
if for all $A_0$, $A_1,\ldots$, $A_k\in \X$,
\begin{equation*}
\lim_{n_1,\ldots,n_k\to\infty} \mu(A_0\cap T^{-n_1}A_1\cap T^{-(n_1+n_2)}A_2\cap \ldots \cap T^{-(n_1+\ldots +n_k)}A_k)=\prod_{i=0}^k\mu(A_i).
\end{equation*}

Of course, $k$-fold mixing implies $j$-fold mixing if $k\ge j$. Rokhlin asked in his article whether the converse is true.
\begin{ques}\label{Rokhlin-conjecture}
Does mixing imply mixing of all orders?
\end{ques}

This is one of the outstanding open questions in ergodic theory. Kalikow \cite{Kal84} proved that Rokhlin's problem is true for rank one systems.
Host \cite{H91} showed that Rokhlin's problem is true for systems with spectral type singular w.r.t Lebesgue measure.
Kalikow's result was extended by Ryzhikov \cite{Ryzhikove} to finite rank systems.
For more notable advances on this question, see \cite{Mar78, Kal84, H91, Ryzhikove, Sta93, Tik07, Bas13,FK14}. And we refer
to \cite{DK78, GM78, Led78} for related counterexamples.

\subsection{Pairwise-Independent Joinings}

The notion of joinings was introduced by Furstenberg \cite{F67}.
Given an integer $d\ge 2$, a {\em joining} of $d$ systems $(X_i,\X_i,\mu_i,T_i), 1\le i\le d$ is a
probability measure $\lambda$ on the product space $\prod_{i=1}^d (X_i,\X_i)$ which is invariant under the
transformation $T_1\times \ldots \times T_d$ and whose marginal projection on each $X_i$ is equal to $\mu_i$. When $(X_1,\X_1,\mu_1,T_1)=\ldots=(X_d,\X_d,\mu_d,T_d)$, we then say that
$\lambda$ is a {\em $d$-fold self-joining}.

\medskip

The joining $\lambda$ is {\em pairwise independent} if its projection on $X_i\times X_j$ is
equal to $\mu_i\times \mu_j$ for all $i\neq j\in \{1,2,\ldots,d\}$, and it is {\em independent} if it is the product measure. A system $(X,\mathcal{X},\mu,T)$ is said to be {\em pairwise independently
determined} (PID) if all pairwise independent $d$-self joinings $(d\geq 3)$ are independent.

\medskip
\subsection{The Relation between Rokhlin's Question and the a.s Convergence Question}

It is well known that a negative answer to the following question would solve Rokhlin's problem (see \cite[Section 10.8]{Nad98} as well as \cite[Proposition 3.2]{dlR06}):

\begin{ques}\label{prob: invertible zero-entropy weakly mixing is PID}\cite[p. 552]{DJR87},\cite[Question 14]{DLR09}
Let $(X,\mathcal{X},\mu,T)$ be a zero-entropy, weakly mixing m.p.s. Is it PID?
\end{ques}

As mentioned above the problem of the almost sure convergence of the multiple ergodic averages may be reduced to the case when the m.p.t. has zero entropy. Thus according to our Main Theorem (see above or Theorem \ref{thm:PID implies a.e}) an affirmative answer to Question \ref{prob: invertible zero-entropy weakly mixing is PID} will prove the almost sure convergence of multiple ergodic averages for weakly mixing systems. We would like however to stress that  we are not familiar with a direct method which relates the two questions.

\subsection{Classes of PID systems}

Question \ref{prob: invertible zero-entropy weakly mixing is PID} was solved by Host in the affirmative for systems with spectral type singular w.r.t Lebesgue measure and by Ryzhikov for finite rank systems (see below).
However it is still open for the general case. We also remark that no counter-example is known even
removing the weak mixing assumption.

\begin{thm}[\cite{H91}]{\rm (Host's Theorem on systems with spectral type singular w.r.t Lebesgue measure\footnote{These systems are also known as {\em systems having purely singular spectrum}.})}\label{Host-thm}
Let $(X_i,\X_i,\mu_i,T_i)$, $i=1,2,\ldots, d$ be m.p.t, at least $d-2$ of which are weakly mixing with
spectral type singular w.r.t Lebesgue measure.
Then every pairwise independent joining of $T_1,\ldots, T_d$ is independent.
\end{thm}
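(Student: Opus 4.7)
My plan is to induct on $d$. The base case $d = 2$ is immediate from the definitions, since pairwise independence of a $2$-fold joining coincides with independence. For the inductive step with $d \geq 3$, after relabelling, assume $T_1, \ldots, T_{d-2}$ are weakly mixing with spectral type singular w.r.t.\ Lebesgue measure, and let $\lambda$ be a pairwise independent $d$-fold joining of $T_1, \ldots, T_d$. Consider the projection $\pi : X_1 \times \cdots \times X_d \to X_1 \times \cdots \times X_{d-1}$ onto the first $d-1$ coordinates. The pushforward $\pi_*\lambda$ is again a pairwise independent joining of $d-1$ systems containing $d-2 \geq (d-1) - 2$ ``good'' factors, so by the inductive hypothesis $\pi_*\lambda = \mu_1 \times \cdots \times \mu_{d-1}$. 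It remains to show that $x_d$ is \emph{jointly} independent of $(x_1, \ldots, x_{d-1})$ under $\lambda$.

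I would encode this as the vanishing of the bounded operator $J : L^2(\mu_d) \to L^2(\mu_1 \times \cdots \times \mu_{d-1})$ defined by $J(f_d) := \E_\lambda[f_d(x_d) \mid x_1, \ldots, x_{d-1}]$. The invariance of $\lambda$ under $T_1 \times \cdots \times T_d$ yields the intertwining relation $J \circ T_d = (T_1 \times \cdots \times T_{d-1}) \circ J$, and pairwise independence forces $J(f_d)$, for mean-zero $f_d$, to be orthogonal in $L^2(\mu_1 \times \cdots \times \mu_{d-1})$ to every single-coordinate subspace $L^2(\mu_i)$. Hence in the tensor (Walsh-type) decomposition $L^2(\mu_1 \times \cdots \times \mu_{d-1}) = \bigoplus_{S \subseteq \{1,\ldots,d-1\}} H_S$ with $H_S := \bigotimes_{i \in S} L^2_0(\mu_i)$ (the mean-zero tensor components), we get $J(f_d) \in \bigoplus_{|S| \geq 2} H_S$. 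Since $\{1,\ldots,d-1\} \setminus \{1,\ldots,d-2\} = \{d-1\}$ has a single element, every $S$ appearing in this direct sum meets the ``good'' index set $\{1,\ldots,d-2\}$, so every nonzero component $J(f_d)_S$ involves a tensor factor coming from some $T_i$ with singular spectral type.

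The main obstacle --- and the technical heart of Host's argument \cite{H91} --- is to convert this structural/spectral information into $J \equiv 0$. Following Host, I would perform a harmonic-analytic analysis on the circle: use the intertwining to relate the spectral measure of $f_d$ under $T_d$ to the convolution spectral measures governing the tensor factors of $J(f_d)$, and then exploit singularity w.r.t.\ Lebesgue measure of the good $T_i$'s to select sparse sequences $n_k \to \infty$ along which the Fourier coefficients of the relevant spectral measures decouple, transferring averages across $J$ via the intertwining until the correlation is forced to zero. The delicate point is to simultaneously handle the convolutions of spectral measures (which appear in each $H_S$ with $|S|\geq 2$) and the potentially arbitrary spectrum of $T_{d-1}$ and $T_d$, while preserving the singularity information supplied by the good factors. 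I expect this Fourier-analytic separation argument --- which is precisely where the hypothesis of singularity w.r.t.\ Lebesgue enters in an essential, non-negotiable way --- to be the main hurdle.
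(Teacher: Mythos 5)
There is a genuine gap, and it sits exactly where the theorem's hypothesis does its work. Note first that the paper does not prove this statement at all: it is quoted as Host's theorem and used as a black box, with the proof residing in \cite{H91}. Judged as a proof, your proposal carries out only the soft part. The induction and the projection step are fine ($\pi_*\lambda$ is a pairwise independent $(d-1)$-joining with $d-2\geq (d-1)-2$ good factors, so it is a product), the intertwining relation $J\circ T_d=(T_1\times\cdots\times T_{d-1})\circ J$ is correct, and the observation that pairwise independence places $J(f_d)$ (for mean-zero $f_d$) in $\bigoplus_{|S|\geq 2}H_S$, with every such $S$ meeting the good index set, is also correct. But none of this uses singularity of the spectral type, and the remaining task --- deducing $J\equiv 0$ from the fact that each nonzero component of $J(f_d)$ involves a tensor factor with singular spectral type --- is not a technical detail: it is the entire content of Host's theorem. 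Indeed, after your reduction the problem is essentially the original one (a joining in which $x_d$ is pairwise independent of each block coordinate need not be independent of the block; this is exactly the pairwise-independent-but-not-independent phenomenon the theorem rules out), so the inductive frame by itself buys nothing.

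Your final paragraph acknowledges this and gestures at a Fourier-analytic argument ``following Host,'' but no lemma is stated, let alone proved. The missing ingredient is Host's harmonic-analytic lemma relating spectral measures: one must analyze, on the torus (or $\mathbb{T}^2$), the joint spectral measure attached to correlations such as $\int f_i\otimes f_j\otimes f_k\,d\lambda$, show via pairwise independence that its marginals are controlled by the individual spectral types, and then use singularity w.r.t.\ Lebesgue of the good factors (together with weak mixing, which your sketch never invokes and which is also a hypothesis) to force the relevant measure, whose structure would otherwise only be constrained to live on convolution-type sets, to vanish. Convolutions of singular measures need not be singular, so the vague plan of ``selecting sparse sequences along which Fourier coefficients decouple'' does not obviously go through and in any case is not executed. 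As it stands, the proposal is an outline whose decisive step is deferred to the very reference the paper cites, so it cannot be accepted as a proof of the statement.
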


Note that we say the spectral type of a m.p.t is of singular if it is singular with respect to Lebesgue measure on $\mathbb{T}$.

\begin{thm}[\cite{Ryzhikove}]{\rm (Ryzhikov's Theorem for Finite Rank Systems)}\label{Ryzhikov-thm}
Let $(X,\X,\mu,T)$ be a finite-rank mixing transformation then it is PID.
\end{thm}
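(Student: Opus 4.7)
The plan is to show that every pairwise independent $d$-fold self-joining $\lambda$ of a finite rank mixing system $(X,\X,\mu,T)$ is the product $\mu^{\otimes d}$, proceeding by induction on $d\ge 3$. A useful observation for the inductive step is that any $(d-1)$-marginal of a pairwise independent $d$-joining is itself pairwise independent, hence by the induction hypothesis equals $\mu^{\otimes (d-1)}$. Thus in the inductive step one only needs to upgrade ``$(d-1)$-wise independent'' to ``$d$-wise independent,'' and it suffices in practice to handle the base case $d=3$ in sufficient generality that it adapts.

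Next I would encode $\lambda$ through the finite rank structure of $T$. Fix $\varepsilon>0$ and a Rokhlin column representation $\{T^i B_j : 0\le i< h_j,\ 1\le j\le r\}$ filling $X$ up to $\varepsilon$. Any $T$-invariant measure on $X^d$ is then determined, up to error $O(\varepsilon)$, by its masses on products of tower levels; and by $T$-invariance together with disintegration, these masses are governed by a bounded-dimensional ``alignment array'' describing how the $d$ coordinates land in the columns relative to one another. Simultaneously I would view $\lambda$ as a system of Markov intertwining operators $\Phi_{ij}\colon L^2(\mu)\to L^2(\mu)$ commuting with $T$; pairwise independence says each $\Phi_{ij}$ is the rank-one projection $f\mapsto\bigl(\int f\,d\mu\bigr)\cdot 1$.

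The decisive step, which I expect to be the principal obstacle, is a Kalikow--Ryzhikov displacement argument. One compares $\lambda$ with its images under $\id\times T^{n_k}\times\cdots\times T^{(d-1)n_k}$ along carefully chosen sequences $n_k\to\infty$ whose increments are adapted to the heights $h_j$. Finite rank confines the collection of alignment arrays to a compact family; mixing, applied to pairs of coordinates, forces every weak-limit point of such translates to have asymptotically independent two-dimensional marginals. Combined with the strengthened inductive hypothesis that all $(d-1)$-marginals of $\lambda$ are already product, the only alignment array consistent with these constraints is the tensor product of its one-dimensional marginals, namely the array corresponding to $\mu^{\otimes d}$. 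Since $\lambda$ is $T$-invariant it coincides with the weak-limit, and letting $\varepsilon\to 0$ concludes $\lambda=\mu^{\otimes d}$. The technical heart lies in extracting enough ``displacement randomness'' from finite rank mixing to neutralize all higher-order correlations simultaneously, which is Ryzhikov's chief contribution.
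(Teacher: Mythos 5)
The first thing to note is that the paper contains no proof of this statement to compare against: Theorem \ref{Ryzhikov-thm} is Ryzhikov's theorem, imported from \cite{Ryzhikove} and used as a black box, so the only question is whether your argument stands on its own. It does not: it is a programme rather than a proof, and the gap sits exactly at the step you yourself flag as ``the principal obstacle.'' The framing moves (induction on $d$, the observation that $(d-1)$-marginals of a pairwise independent $d$-joining are pairwise independent and hence product by induction, a Rokhlin-column coding of $\lambda$, the Markov operator picture in which pairwise independence says each $\Phi_{ij}$ is the projection onto constants) are all sound and standard, but they only set the stage. The ``decisive step'' then asserts that finite rank plus mixing forces the only admissible alignment array to be the product one; no mechanism is given for this, and it is precisely the theorem restated in tower language. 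In particular, the appeal to mixing of pairs of coordinates is vacuous here: pairwise independence of $\lambda$ (and of its translates) is already hypothesized, and the entire difficulty is in the triple and higher correlations, which your sketch ``neutralizes'' only by fiat.

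There is also a concrete incorrect step: $\lambda$ is invariant under the diagonal action $T\times\cdots\times T$, not under $\id\times T^{n}\times\cdots\times T^{(d-1)n}$, so the translates $(\id\times T^{n_k}\times\cdots\times T^{(d-1)n_k})_{*}\lambda$ are in general different measures and their weak limit points need not coincide with $\lambda$; the sentence ``since $\lambda$ is $T$-invariant it coincides with the weak-limit'' does not follow. What an actual proof requires, and what Ryzhikov (and Kalikow for rank one) actually supplies, is a structural theorem about the weak closure of the powers $\{T^{n}\}$ for finite-rank mixing maps --- e.g.\ that limit Markov operators are convex combinations of a bounded number of powers of $T$ and the projection onto constants, with the finite rank controlling the number of terms and mixing killing the rigid part --- which is then played against a candidate pairwise independent joining to force the third (and higher) coordinates to decouple. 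Nothing of that kind, nor a substitute for it (such as Kalikow's combinatorial counting of names over the towers), appears in your outline, so the argument as written does not establish the theorem.
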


Theorem \ref{Host-thm} and Theorem \ref{Ryzhikov-thm} are two important results on Question \ref{prob: invertible zero-entropy weakly mixing is PID}.
In \cite[Definition 7]{Tho95}
Thouvenot, following Ratner, introduced the $R_{p}$\footnote{Also referred to as the  R-property (index $p$ is implicit).}
($p\neq0)$ property for certain continuous $\mathbb{{R}}-$flows
$\{T_{t}\}_{t\in\mathbb{{R}}}$. In her groundbreaking work Ratner
showed that the classical horocycle flows on the unit tangent bundle
of a surface of constant negative curvature with finite volume have
$R_{p}$ for all $p\neq0$ \cite{Rat82a,Rat82b,Rat83}. One can show that
any discretization of the flow $\{T_{nt_{0}}\}_{n\in\mathbb{{Z}}}$
which is ergodic and $R_{t_{0}}$ is PID ( \cite[p. 8569]{Lem09}). Notice that $R_{t_{0}}$ is ergodic for all $t_{0}\in \mathbb{R}$ except possibly for a countable set (\cite[Lemma 12.1]{CFS}). Thus the classical horocycle flows furnish examples of PID flows. One can prove similar theorems
for weakenings of the $R$-property (\cite{FK14}).

\begin{lem}
\label{thm:A-weakly-mixing isometric of PID is PID}A weakly mixing\footnote{This refers to $Y$ and should not be confused with a relatively weakly
mixing extension.} isometric extension $Y=X\times_{\sigma}K/H$ of a PID action is again PID.
\end{lem}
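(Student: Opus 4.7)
\medskip
\noindent\textbf{Proof plan.} I will first push $\lambda$ down to $X^{d}$ to exploit the PID hypothesis on $X$, and then analyse the resulting relative joining over $\mu^{d}$ by a Peter-Weyl decomposition on the isometric fibres, ultimately invoking the weak mixing of $Y$ through the ergodicity of $(Y^{d},\nu^{d},S^{d})$.

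Let $\lambda$ be a pairwise independent $d$-fold self-joining of $(Y,\nu,S)$ with $d\ge 3$, where $S(x,kH)=(Tx,\sigma(x)kH)$ and $\nu=\mu\times m$ with $m$ the $K$-invariant probability on $K/H$. The factor map $\pi:Y\to X$ intertwines $S$ with $T$ and pushes $\lambda$ forward to a $d$-self-joining $\lambda_{X}:=(\pi\times\cdots\times\pi)_{*}\lambda$ whose $2$-marginals are $\mu\times\mu$; the PID assumption on $X$ therefore gives $\lambda_{X}=\mu^{d}$. Disintegrating $\lambda$ over $\mu^{d}$ yields a measurable family $\bar x\mapsto\lambda_{\bar x}$ of probabilities on $(K/H)^{d}$ satisfying the covariance
\[
(\sigma(x_{1}),\ldots,\sigma(x_{d}))_{*}\lambda_{\bar x}=\lambda_{T^{d}\bar x}\qquad\mu^{d}\text{-a.e.},
\]
and the whole task becomes to prove $\lambda_{\bar x}=m^{d}$ almost surely.

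By Peter-Weyl, $L^{2}(K/H)=\bigoplus_{\rho}V_{\rho}$ decomposes into $K$-isotypic subspaces (only those irreducible $\rho$ with non-zero $H$-fixed vectors contribute), hence $L^{2}((K/H)^{d})=\bigoplus_{\bar\rho}V_{\bar\rho}$ with $V_{\bar\rho}=V_{\rho_{1}}\otimes\cdots\otimes V_{\rho_{d}}$. Fix an orthonormal basis $(e_{i})$ of $V_{\bar\rho}$, set $a_{i}(\bar x):=\int\overline{e_{i}}\,d\lambda_{\bar x}$, and form the bounded lift
\[
\widetilde a(\bar x,\bar y):=\sum_{i}a_{i}(\bar x)\,e_{i}(\bar y)\qquad\text{on }Y^{d}.
\]
Using the covariance above together with the unitarity of the $K^{d}$-representation on $V_{\bar\rho}$ one checks that $\widetilde a$ is $S^{d}$-invariant. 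Weak mixing of $Y$ implies $(Y^{d},\nu^{d},S^{d})$ is ergodic, so $\widetilde a$ equals a constant $\nu^{d}$-a.e. When $\bar\rho\neq(\mathbf{1},\ldots,\mathbf{1})$ every $\int e_{i}\,dm^{d}=0$ by orthogonality of non-trivial isotypic components to the constants, so $\int\widetilde a\,d\nu^{d}=0$, the constant must vanish, and each $a_{i}\equiv 0$. Only the trivial isotypic component survives, giving $\lambda_{\bar x}=m^{d}$ almost everywhere; consequently $\lambda=\nu^{d}$ and $Y$ is PID.

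The main technical point I anticipate is the invariance of the lift $\widetilde a$: when $K$ is abelian the pieces $V_{\rho}$ are one-dimensional and the invariance is essentially a scalar identity, but for general compact $K$ with non-trivial $H$ one has to bundle a full orthonormal basis of $V_{\bar\rho}$ together and exploit unitarity of the representation so that the matrix factors coming from the cocycle cancel. Once this bookkeeping is settled, the ergodicity of $Y^{d}$ does the rest.
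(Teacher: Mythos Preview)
Your argument is correct. The covariance computation you flag is indeed routine: writing $e_i(\bar g\,\bar y)=\sum_j c_{ij}(\bar g)\,e_j(\bar y)$ with $c(\bar g)$ unitary, one gets $a_i(T^{(d)}\bar x)=\sum_j\overline{c_{ij}(\bar\sigma(\bar x))}\,a_j(\bar x)$, and then $\widetilde a\circ S^{(d)}=\sum_{j,l}\bigl(\sum_i\overline{c_{ij}}c_{il}\bigr)a_j e_l=\widetilde a$ by unitarity. After that, ergodicity of $(Y^d,\nu^d,S^{(d)})$ forces each $\widetilde a$ to vanish for $\bar\rho\neq(\mathbf 1,\ldots,\mathbf 1)$, and density of matrix coefficients in $C((K/H)^d)$ gives $\lambda_{\bar x}=m^d$. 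Note that your argument actually proves the stronger statement that \emph{any} $S^{(d)}$-invariant probability on $Y^d$ projecting to $\mu^d$ must equal $\nu^d$; pairwise independence of $\lambda$ is used only, via PID of $X$, to obtain that projection.

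The paper does not give a proof at all: it simply invokes \cite[Lemma~5.2]{DJR87}, which is stated for group extensions $Y=X\times_\sigma K$, and asserts that the same proof goes through for $K/H$ because uniqueness of the $K$-invariant probability on $K/H$ still holds. The del Junco--Rudolph argument uses the \emph{right} $K^d$-action on $Y^d$ (which commutes with $S^{(d)}$), averages $\lambda$ over it, identifies the result as $\nu^d$ by uniqueness of Haar, and then concludes $\lambda=\nu^d$ from ergodicity of $\nu^d$. Your Peter--Weyl route is a genuine alternative: it never invokes a commuting compact-group action on $Y$, working instead with the left $K^d$-isotypic decomposition of the fibres. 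This is arguably cleaner for the homogeneous-space case, since there is no natural right $K$-action on $K/H$ to average over; by contrast, the paper's one-line extension of the del Junco--Rudolph proof to $K/H$ leaves that adaptation implicit.
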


\begin{proof}
As in \cite[Lemma 5.2]{DJR87} which is stated for group extensions
however the proof works also for isometric extensions as uniqueness
of Haar measure holds in this case too (\cite[Theorem 2.3.5]{AM07}).
\end{proof}

\begin{rem}
\cite[Theorem 3]{Rob92} For an arbitrary weakly mixing $X$ and a compact
group $K$ it is a generic property for cocycles $\sigma$ that $X\times_{\sigma}K$
is again weakly mixing.
\end{rem}

\subsection{JPID and PID}

\begin{de}(\cite[p. 449]{DJR87})
A family of m.p.t $\{(X_{i},\mathcal{X}_{i},\mu_{i},T_{i})\}_{i=1}^n$
is said to be {\em jointly pairwise
independently determined} (JPID) if any joining on $X_{1}\times X_{2}\times\cdots X_{n}$
which is pairwise independent must be independent.

Thus $(X,\mathcal{X},\mu,T)$ is PID if for all $n\in\N$, any $n$ copies of $X$ are JPID.
\end{de}

The notions of PID and JPID are connected by the following theorem.
\begin{thm}(\cite[Proposition 5.3]{DJR87})\label{thm:PIDs are JPID}
Let $\{(X_{i},\mathcal{X}_{i},\mu_{i},T_{i})\}_{i=1}^n$
be  m.p.t. If each of them is PID, then they are JPID.
\end{thm}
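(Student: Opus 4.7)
My plan is to proceed by induction on $n$. The base case $n=2$ is trivial: pairwise independence of a 2-joining means by definition that its projection onto $X_{1}\times X_{2}$ equals $\mu_{1}\times\mu_{2}$, which is already the independent joining.

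For the inductive step, suppose the statement holds for all collections of fewer than $n$ PID systems, and let $\lambda$ be a pairwise-independent joining of the PID systems $(X_{1},\mu_{1},T_{1}),\ldots,(X_{n},\mu_{n},T_{n})$. For each index $i\in\{1,\ldots,n\}$, the projection of $\lambda$ onto $\prod_{j\neq i}X_{j}$ is itself a pairwise-independent joining of the $n-1$ PID systems $\{X_{j}:j\neq i\}$, hence by the inductive hypothesis it coincides with $\prod_{j\neq i}\mu_{j}$. In particular every $(n-1)$-marginal of $\lambda$ is already the corresponding product measure, and the remaining task is to promote this $(n-1)$-wise independence to full $n$-wise independence.

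The key step is to invoke the PID property of one of the factors, say $X_{1}$. I would disintegrate $\lambda$ over $\prod_{j\geq 2}X_{j}$ --- whose $\lambda$-marginal is known by the preceding paragraph to be $\prod_{j\geq 2}\mu_{j}$ --- obtaining conditional probabilities $\{\lambda_{\mathbf{y}}\}$ on $X_{1}$, and then build from these an auxiliary $d$-fold self-joining $\sigma$ of $X_{1}$ for some $d\geq 3$, by coupling $d$ copies of the parameter $\mathbf{y}$ through an appropriate joining on $\bigl(\prod_{j\geq 2}X_{j}\bigr)^{d}$. If $\sigma$ can be arranged to be pairwise independent, then PID of $X_{1}$ forces $\sigma$ to be the product measure $\mu_{1}^{\otimes d}$, and translating this equality back through the construction should force $\lambda_{\mathbf{y}}=\mu_{1}$ for almost every $\mathbf{y}$, i.e.\ $\lambda=\mu_{1}\times\cdots\times\mu_{n}$.

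The principal obstacle is the choice of the auxiliary coupling. Overly symmetric choices (for instance, the $d$-fold relatively independent self-joining of $\lambda$ over $\prod_{j\geq 2}X_{j}$) tend to collapse to the product measure regardless of $\lambda$, so PID extracts no information from them; conversely, more asymmetric choices can fail to be pairwise independent and hence lie outside the scope of PID. Threading this needle requires an auxiliary coupling whose pairwise marginals are products (so that pairwise independence of $\sigma$ follows from the already established $(n-1)$-wise independence of $\lambda$), yet whose higher joint structure is rigid enough to distinguish a non-product $\lambda$. Locating such a construction and verifying its required properties is the delicate heart of the argument.
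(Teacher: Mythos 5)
Note first that the paper contains no proof of this statement: it is quoted directly from \cite[Proposition 5.3]{DJR87}, so the only question is whether your outline amounts to an independent proof, and it does not. The parts you actually carry out are the easy ones: the case $n=2$ is a tautology, and the fact that every $(n-1)$-marginal of a pairwise independent joining $\lambda$ is a product is immediate from the inductive hypothesis. The entire content of the theorem is the step you leave open --- how to manufacture from $\lambda$ a pairwise independent self-joining of a \emph{single} factor, of order at least $3$, to which the PID hypothesis can be applied. PID speaks only about self-joinings of one system, JPID about joinings of several, and the bridge between the two is precisely what the cited proposition supplies; your sketch defers exactly this, as you yourself acknowledge, so it restates the problem at its decisive point rather than solving it. One inaccuracy is worth flagging: the symmetric candidate you dismiss, the $d$-fold relatively independent self-joining of $\lambda$ over $\prod_{j\geq 2}X_{j}$ projected to the copies of $X_{1}$, does not ``collapse to the product regardless of $\lambda$''; writing $\nu=\mu_{2}\times\cdots\times\mu_{n}$ and $\lambda_{\mathbf{y}}$ for the conditional law of $x_{1}$ given $\mathbf{y}$, its two-dimensional marginals are $\int\lambda_{\mathbf{y}}\otimes\lambda_{\mathbf{y}}\,d\nu(\mathbf{y})$, and these are products if and only if $\lambda$ itself is a product. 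So that route fails by circularity (its pairwise independence is equivalent to the desired conclusion), not by vacuity.

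For orientation: the standard way to bring PID into play is to condition over a \emph{single} coordinate rather than over the complementary block. Pairwise independence of $\lambda$ makes the conditional law of every other coordinate given $x_{1}$ equal to its marginal, so the relatively independent product of several copies of $\lambda$ over the first coordinate, projected away from that coordinate, is verifiably pairwise independent; once such an auxiliary joining is known to be a product, the identity $\int\mathbb{E}_{\lambda}(f\mid x_{1})^{2}\,d\mu_{1}=\bigl(\int f\,d(\mu_{2}\times\cdots\times\mu_{n})\bigr)^{2}$ forces $\mathbb{E}_{\lambda}(f\mid x_{1})$ to be constant and hence $\lambda=\mu_{1}\times\cdots\times\mu_{n}$. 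But this auxiliary joining involves several copies of each of the remaining systems, so the induction cannot be run simply on $n$ with one copy of each system as you propose; setting up a well-founded induction that handles repeated factors is where the real work lies, and it is exactly the substance of \cite[Proposition 5.3]{DJR87}. As written, your proposal is a plausible plan with the decisive construction absent, not a proof.
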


The following lemma will be used in the next section. We note that
if $\mu$ and $\nu$ are two invariant measures, $\nu\ll \mu$ and $\mu$ is ergodic then $\nu=\mu$ \cite[Remarks of Theorem 6.10]{Walters}.

\begin{lem}\label{lem:PIDn is PID}
Let $(X,\mathcal{X},\mu,T)$ be a weakly mixing PID m.p.t. Then $(X,\mathcal{X},\mu,T^{n})$
is PID for any $n\in\Z$ with $n\not=0$.
\end{lem}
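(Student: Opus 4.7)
The plan is to take a pairwise independent $d$-fold self-joining $\lambda$ of $(X,\mathcal{X},\mu,T^n)$ for $d\geq 3$, symmetrize it over the orbit of $T\times\cdots\times T$ to produce a pairwise independent self-joining of $(X,\mathcal{X},\mu,T)$, apply the PID hypothesis to deduce that the symmetrization is the product measure, and then use ergodicity to conclude that $\lambda$ itself must equal the product.

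More precisely, given such a $\lambda$, I would first define
\begin{equation*}
\widetilde{\lambda}=\frac{1}{n}\sum_{k=0}^{n-1}(T\times T\times\cdots\times T)^{k}_{*}\lambda,
\end{equation*}
which is invariant under $T\times\cdots\times T$ (the averaging closes up the $T^n$-invariance under the full $T$-action). Since each marginal of $\lambda$ is $\mu$ and $\mu$ is $T$-invariant, each marginal of $\widetilde{\lambda}$ is $\mu$. For the pairwise marginals, the projection of $\lambda$ on any $X_i\times X_j$ (with $i\neq j$) is $\mu\times\mu$ by pairwise independence, and $(T\times T)_{*}(\mu\times\mu)=\mu\times\mu$, so the projection of $\widetilde{\lambda}$ on $X_i\times X_j$ is again $\mu\times\mu$. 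Thus $\widetilde{\lambda}$ is a pairwise independent $d$-fold self-joining of $(X,\mathcal{X},\mu,T)$, and the PID assumption on $T$ forces $\widetilde{\lambda}=\mu^{d}$.

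To pass from $\widetilde{\lambda}=\mu^{d}$ back to $\lambda=\mu^{d}$, I would exploit the fact that $T$ is weakly mixing: then the $d$-fold product $T\times\cdots\times T$ is weakly mixing, hence so is its $n$-th power $T^{n}\times\cdots\times T^{n}$, so $\mu^{d}$ is ergodic as a $T^{n}\times\cdots\times T^{n}$-invariant measure. Each measure $(T\times\cdots\times T)^{k}_{*}\lambda$ is $T^{n}\times\cdots\times T^{n}$-invariant and is dominated (up to the factor $n$) by $\widetilde{\lambda}=\mu^{d}$, so it is absolutely continuous with respect to the ergodic measure $\mu^{d}$. The remark invoked just before the lemma statement (an invariant measure absolutely continuous with respect to an ergodic invariant measure must equal it) then gives $(T\times\cdots\times T)^{k}_{*}\lambda=\mu^{d}$ for every $k$, and in particular $\lambda=\mu^{d}$.

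I do not expect any serious obstacle here: the only subtlety is verifying that the symmetrization trick interacts correctly with the pairwise-independence condition (which it does, since $\mu\times\mu$ is fixed by $T\times T$) and checking that weak mixing of $T$ upgrades to ergodicity of $\mu^d$ for the diagonal $T^n$-action, which is standard since powers and finite products of weakly mixing systems remain weakly mixing.
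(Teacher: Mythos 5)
Your argument is correct and follows essentially the same route as the paper's proof: symmetrize $\lambda$ over the orbit of $T\times\cdots\times T$, apply the PID hypothesis to the average to get $\mu^{d}$, and then use ergodicity of $\mu^{d}$ under the diagonal $T^{n}$-action together with absolute continuity of each translate to conclude $\lambda=\mu^{d}$. The only detail the paper adds is the one-line reduction of negative $n$ to $n\ge 2$ (using that $T$ is PID if and only if $T^{-1}$ is), which your averaging over $k=0,\ldots,n-1$ implicitly assumes.
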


\begin{proof} Since $(X,\mathcal{X},\mu,T)$ is PID if and only if $(X,\mathcal{X},\mu, T^{-1})$ is PID, we only need to consider the case when $n\ge 2$.
Let $\lambda$ be a $T^{n}-$joining on $X^{k}$ which is pairwise
independent. Define
$$T^{(k)}=T\times T\times  \cdots \times T \, \text{ ($k$-times) and }
\rho=\frac{1}{n}\sum_{i=0}^{n-1}(T^{(k)}_{*})^{i}\lambda.$$
As $(T^{(k)}_{*})^{n}\lambda=\lambda$, $(T^{(k)}_{*})\rho=\rho$. Denote by $\pi_{lr}:X^{k}\rightarrow X$
the projection on the $l$-th and $r$-th coordinates. We claim $\rho$
is pairwise independent. Indeed, $$\pi_{lr}\rho=\pi_{lr}(\frac{1}{n}\sum_{i=0}^{n-1}(T^{(k)}_{*})^{i}\lambda)=\frac{1}{n}
\sum_{i=0}^{n-1}\pi_{lr}((T^{(k)}_{*})^{i}\lambda)=\mu\times\mu,$$
as
\begin{eqnarray*}
(T^{(k)}_{*})^i\lambda \big( \pi_{lr}^{-1}(A) \big)&=&\lambda\big( (T^{(k)})^{-i}\pi_{lr}^{-1}(A)\big)\\
&=&\lambda \big( \pi_{lr}^{-1}((T\times T)^{-i}A)\big)\\ &=& \mu\times\mu((T\times T)^{-i}A)= \mu\times\mu(A)
\end{eqnarray*}
for $A$ measurable in $\mathcal{X}\times \mathcal{X}$. As $(X,\mathcal{B},\mu,T)$ is PID,
we conclude $\rho=\mu^{k}$. As $(X,\mathcal{B},\mu,T)$ is weakly
mixing, $\mu^{k}$ is $(T^{(k)})^{n}$-ergodic. Combining this with the fact that $\mu^{k}=\rho=\frac{1}{n}\Sigma_{i=0}^{n-1}(T^{(k)}_{*})^{i}\lambda$ and each $(T^{(k)}_{*})^{i}\lambda$ is $(T^{(k)})^{n}$-invariant for $i=0,1,\cdots,k-1$, one has $\lambda=\mu^{k}$.
\end{proof}





\section{Multiple ergodic averages for weakly mixing systems}\label{sec-5}

 In this section we use the idea of models to prove pointwise convergence of multiple ergodic averages for a weakly mixing PID m.p.t.
 We give some applications, particularly a simpler proof for Assani's result \cite{Assani1998}.  We start with a simple observation.

\begin{lem}\label{lem-unique}
Let $(X,T)$ be a uniquely ergodic weakly mixing system and $n\in \N$. Then $(X,T^n)$ is also uniquely ergodic.
\end{lem}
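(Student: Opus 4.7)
The plan is to use the standard averaging trick that converts a $T^n$-invariant measure into a $T$-invariant one, together with the fact that weak mixing implies total ergodicity.

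First I would let $\mu$ denote the unique $T$-invariant Borel probability measure on $X$. Since $(X,\mathcal{X},\mu,T)$ is weakly mixing, it is totally ergodic, so $T^n$ is $\mu$-ergodic for every nonzero $n \in \Z$. This is the measure-theoretic input I will combine with unique ergodicity.

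Next, suppose $\nu$ is any $T^n$-invariant Borel probability measure on $X$; the goal is to show $\nu=\mu$. Form the symmetrization
\begin{equation*}
\bar{\nu}=\frac{1}{n}\sum_{i=0}^{n-1} T^i_{*}\nu,
\end{equation*}
which is easily checked to be $T$-invariant (the sum telescopes under $T_{*}$ because $T^n_{*}\nu=\nu$). By unique ergodicity of $(X,T)$ we conclude $\bar{\nu}=\mu$.

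Now I would read off absolute continuity: each $T^i_{*}\nu$ is a nonnegative measure bounded above by $n\mu$, hence $T^i_{*}\nu\ll\mu$ for $i=0,1,\ldots,n-1$. Each $T^i_{*}\nu$ is also $T^n$-invariant (because $T^n$ commutes with $T^i$ and $\nu$ is $T^n$-invariant). Since $\mu$ is $T^n$-ergodic, the observation recalled in the excerpt just before Lemma~\ref{lem:PIDn is PID} (that a $T^n$-invariant measure absolutely continuous with respect to an ergodic $T^n$-invariant probability measure must equal it) forces $T^i_{*}\nu=\mu$ for every $i$. Taking $i=0$ gives $\nu=\mu$, which proves unique ergodicity of $(X,T^n)$. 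I do not anticipate a real obstacle here; the only point that needs a brief sentence is the justification that weak mixing of $(X,\mathcal{X},\mu,T)$ yields $\mu$-ergodicity of $T^n$, which is classical.
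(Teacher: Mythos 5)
Your proof is correct and follows essentially the same route as the paper: average the pushforwards $T^i_{*}\nu$ to get a $T$-invariant measure, identify it with $\mu$ by unique ergodicity, and then use $T^n$-ergodicity of $\mu$ (from weak mixing) to force each summand, in particular $\nu$, to equal $\mu$. You merely spell out the final step (via $T^i_{*}\nu\le n\mu$ and the absolute-continuity remark) that the paper leaves implicit.
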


\begin{proof}
Let $\mu$ be the unique invariant measure of $(X,T)$ and $\nu$ be any $T^n$-invariant measure. Put
$\mu'=\frac 1n (\nu+T\nu +\ldots +T^{n-1}\nu).$
Then $T\mu'=\mu'$ which implies that $\mu'=\mu$. That is,
$\mu=\frac 1n (\nu+T\nu +\ldots +T^{n-1}\nu).$ As $\mu$ is also $T^n$-ergodic, we conclude that $\mu=\nu=T\nu=\ldots =T^{n-1}\nu$
which proves the lemma.
\end{proof}

Let $(X,T)$ be a topological dynamical system . We denote by $M(X)$ the collection of all probability Borel measures on $X$.
\begin{lem}\label{lem-bougain}
Let $(X,\X,\mu,T)$ be a weakly mixing m.p.t. and $(X,T)$ be uniquely ergodic. Then for all $i\neq j \in \N$,
there is some $X_{i,j}\in \X$ such that $\mu(X_{i,j})=1$ and for all $x\in X_{i,j}$
\begin{equation*}
  \frac{1}{N} \sum_{n=0}^{N-1} (T^i\times T^j)^n\d_{(x,x)}\longrightarrow
\mu\times \mu, \ N\to \infty, \quad \text{weakly in $M(X\times X)$.}
\end{equation*}
\end{lem}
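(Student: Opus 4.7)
The plan is to reduce the claim to a scalar pointwise convergence statement for products $f(T^{in}x)\,g(T^{jn}x)$ and then to invoke Bourgain's double recurrence theorem together with the $L^{2}$ theory for weakly mixing systems. Since $X$ is compact metrizable, fix a countable set $\mathcal{D}\subset C(X)$ whose rational linear span is uniformly dense in $C(X)$. By Stone--Weierstrass, finite linear combinations of product functions $f\otimes g$ with $f,g\in\mathcal{D}$ are uniformly dense in $C(X\times X)$, so weak-$\ast$ convergence of the empirical measures $\frac{1}{N}\sum_{n=0}^{N-1}(T^i\times T^j)^n\delta_{(x,x)}$ to $\mu\times\mu$ reduces to verifying
\[
\frac{1}{N}\sum_{n=0}^{N-1} f(T^{in}x)\,g(T^{jn}x)\;\longrightarrow\;\int f\,d\mu\cdot\int g\,d\mu
\]
for every $f,g\in\mathcal{D}$. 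Intersecting the countably many resulting $\mu$-conull sets over $(f,g)\in\mathcal{D}\times\mathcal{D}$ then produces a single full-measure $X_{i,j}\in\X$ which handles all $F\in C(X\times X)$ at once via uniform approximation.

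For a fixed pair $(f,g)\in\mathcal{D}\times\mathcal{D}$, I would first apply Bourgain's pointwise double recurrence theorem \cite{B90}: since $i\neq j$ are positive integers and $f,g\in L^{\infty}(\mu)$, there is a $\mu$-conull set on which $\frac{1}{N}\sum_{n=0}^{N-1} f(T^{in}x)\,g(T^{jn}x)$ converges to some limit $L_{f,g}(x)$. To identify $L_{f,g}$ with the constant $\int f\,d\mu\cdot\int g\,d\mu$, I would then invoke the much softer $L^{2}$ theory available under weak mixing: by Furstenberg's original argument \cite{F77}, for a weakly mixing $T$ and distinct positive $i,j$ the very same averages converge in $L^{2}(\mu)$ to $\int f\,d\mu\cdot\int g\,d\mu$ (spectrally, the characteristic factor is the Kronecker factor, which is trivial under weak mixing). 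Since the almost-everywhere and $L^{2}$ limits of a single sequence must agree almost everywhere, we obtain $L_{f,g}(x)=\int f\,d\mu\cdot\int g\,d\mu$ for $\mu$-a.e.\ $x$, completing the verification for the pair $(f,g)$.

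The only substantive input is Bourgain's deep pointwise double recurrence theorem; the remaining steps are routine approximation and standard $L^{2}$ theory. I do not foresee any genuine obstacle, but the one spot where weak mixing enters essentially is in pinning down the almost-everywhere limit: Bourgain's theorem by itself produces convergence to \emph{some} measurable function, and it is weak mixing which forces that function to be the constant $\int f\,d\mu\cdot\int g\,d\mu$ rather than a non-trivial conditional expectation. I note that unique ergodicity of $(X,T)$ is not actually used in this proof; it is part of the ambient setup because the lemma is designed for later application to topological models.
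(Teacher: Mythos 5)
Your proposal is correct and follows essentially the same route as the paper: Bourgain's double recurrence theorem gives almost sure convergence for continuous $f,g$, weak mixing identifies the limit with $\int f\,d\mu\int g\,d\mu$ via the $L^2$ limit, and a countable dense family in $C(X)$ plus uniform approximation upgrades this to weak-$\ast$ convergence of the empirical measures on a single conull set. Your side remark is also accurate: unique ergodicity is not used in this lemma's proof (it is needed elsewhere, e.g.\ in Lemma \ref{lem-unique} and the Claim in Theorem \ref{thm:PID implies a.e}).
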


\begin{proof}
By Bourgain's double recurrence theorem \cite{B90}, for all $f,g\in C(X)$, the limit of $\displaystyle \frac 1 N\sum_{n=0}^{N-1}f(T^{in}x)g(T^{jn}x)$ exists a.s. Since $(X,\X,\mu,T)$ is weakly mixing, $$\displaystyle  \lim_{N\to\infty }\frac 1 N\sum_{n=0}^{N-1}f(T^{in}x)g(T^{jn}x)\stackrel{L^2}{=} \int f d\mu\int g d\mu.$$ Hence one has that for all $f,g\in C(X)$,
\begin{equation}\label{B-2}
  \lim_{N\to\infty}\frac 1 N\sum_{n=0}^{N-1}f(T^{in}x)g(T^{jn}x)\stackrel{a.s.}{= } \int f d\mu\int g d\mu .
\end{equation}

Let $\{f_k\}_{k=1}^\infty$ be a countable dense subset of $C(X)$. According to (\ref{B-2}), for any pair $(k_1,k_2)$, there is $X^{(k_1,k_2)}\in \X$ such that $\mu(X^{(k_1,k_2)})=1$ and for all $x\in X^{(k_1,k_2)}$
\begin{equation*}
  \lim_{N\to\infty}\frac 1 N\sum_{n=0}^{N-1}f_{k_1}(T^{in}x)f_{k_2}(T^{jn}x){= } \int f_{k_1} d\mu\int f_{k_2} d\mu  .
\end{equation*}
Put $X_{i,j}=\bigcap_{k_1, k_2=1}^\infty X^{(k_1,k_2)}$. We have that $\mu(X_{i,j})=1$ and for each $x\in X_{i,j}$,
\begin{equation*}
  \lim_{N\to\infty}\frac 1 N\sum_{n=0}^{N-1}f_{k_1}(T^{in}x)f_{k_2}(T^{jn}x){=} \int f_{k_1} d\mu\int f_{k_2} d\mu
\end{equation*}
holds for all $k_1,k_2\in \N$. By approximating given continuous functions by members of $\{f_k\}_{k=1}^\infty$, one has
that for each $x\in X_{i,j}$,
\begin{equation*}
\begin{split}
\lim_{N\to\infty} \frac 1 N\sum_{n=0}^{N-1} (T^i\times T^j)^n\d_{(x,x)}\big(f\otimes g\big)&= \lim_{N\to\infty} \frac 1 N\sum_{n=0}^{N-1}f(T^{in}x)g(T^{jn}x)\\ &=  \int f d\mu\int g d\mu=\mu\times \mu\big(f\otimes g\big)
\end{split}
\end{equation*}
holds for all $f,g\in C(X)$.
The proof is completed.
\end{proof}

\begin{lem}\label{lem-product}
Let $\{a_i\}, \{b_i\}\subseteq \C$. Then
\begin{equation}\label{}
\prod_{i=1}^k a_i-\prod_{i=1}^k b_i=(a_1-b_1)b_2\ldots b_k+
a_1(a_2-b_2)b_3\ldots b_k +a_1\ldots a_{k-1}(a_k-b_k).
\end{equation}
\end{lem}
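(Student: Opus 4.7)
The plan is to prove the identity by a standard telescoping argument, which is essentially the only natural approach. I would introduce hybrid products interpolating between $\prod b_i$ and $\prod a_i$, namely the quantities
\begin{equation*}
c_j := a_1 a_2 \cdots a_{j-1}\, b_j b_{j+1} \cdots b_k, \qquad j = 1, 2, \ldots, k+1,
\end{equation*}
with the usual convention that empty products equal $1$. By construction $c_1 = b_1 b_2 \cdots b_k = \prod_{i=1}^k b_i$ and $c_{k+1} = a_1 a_2 \cdots a_k = \prod_{i=1}^k a_i$.

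The key observation is that two consecutive hybrids differ only in the $j$-th factor, so
\begin{equation*}
c_{j+1} - c_j = a_1 \cdots a_{j-1}\bigl(a_j - b_j\bigr) b_{j+1} \cdots b_k.
\end{equation*}
Writing the target difference as a telescoping sum,
\begin{equation*}
\prod_{i=1}^k a_i - \prod_{i=1}^k b_i = c_{k+1} - c_1 = \sum_{j=1}^{k} (c_{j+1} - c_j),
\end{equation*}
and substituting the expression for $c_{j+1} - c_j$ gives exactly the right-hand side of the stated formula.

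No step looks genuinely difficult: the entire content is the choice of the interpolating sequence $c_j$. If one preferred a completely mechanical write-up, an induction on $k$ works equally well, using the decomposition
\begin{equation*}
\prod_{i=1}^k a_i - \prod_{i=1}^k b_i = \Bigl(\prod_{i=1}^{k-1} a_i\Bigr)(a_k - b_k) + b_k\Bigl(\prod_{i=1}^{k-1} a_i - \prod_{i=1}^{k-1} b_i\Bigr)
\end{equation*}
and applying the inductive hypothesis to the second summand; but the telescoping presentation is cleaner and makes the structure of the identity transparent.
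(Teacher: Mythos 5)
Your telescoping argument is correct, and it is exactly the standard verification of this identity; the paper itself states Lemma \ref{lem-product} without proof, treating it as immediate algebra, so there is nothing to compare beyond noting that your write-up supplies the obvious omitted details.
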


Thus, if $|a_i|,|b_i|\le 1$ for all $1\le i\le k$ then $|\prod_{i=1}^k a_i-\prod_{i=1}^k b_i|\le \sum_{i=1}^k|a_i-b_i|$.
Now we are ready to show the main result.

\begin{thm}\label{thm:PID implies a.e}
Let $(X,\mathcal{X},\mu,T)$ be a weakly mixing PID m.p.t. Then for all $d$ and all $f_1,\ldots,f_d\in L^\infty(X,\X,\mu)$,
\begin{equation*}
    \frac 1 N\sum_{n=0}^{N-1}f_1(T^nx)\ldots
f_d(T^{dn}x)\stackrel{a.s.}{\longrightarrow } \int f_1d\mu\int f_2d\mu \ldots \int f_dd\mu , \quad N\to \infty.
\end{equation*}
\end{thm}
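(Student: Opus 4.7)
The plan is to move to a uniquely ergodic topological model, encode the multiple average as integration against an empirical measure on $X^{d}$, identify the weak-$*$ limit via the JPID property, and finally extend from continuous to $L^{\infty}$ observables by Birkhoff's theorem.

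\smallskip

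First, by the Jewett--Krieger theorem one may assume that $(X,T)$ is a uniquely ergodic topological dynamical system with unique invariant measure $\mu$ (weak mixing and the PID property are measure-theoretic and so are preserved), and, by Lemma~\ref{lem-unique}, each $(X,T^{i})$ is uniquely ergodic as well. For $x\in X$ define the empirical measure on $X^{d}$
\begin{equation*}
\nu_{N}(x)=\frac{1}{N}\sum_{n=0}^{N-1}\delta_{(T^{n}x,T^{2n}x,\ldots,T^{dn}x)}.
\end{equation*}
Then $\frac{1}{N}\sum_{n=0}^{N-1}f_{1}(T^{n}x)\cdots f_{d}(T^{dn}x)=\int f_{1}\otimes\cdots\otimes f_{d}\,d\nu_{N}(x)$, so it suffices to prove $\nu_{N}(x)\to\mu^{d}$ weakly for $\mu$-a.e.\ $x$.

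\smallskip

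Put $S=T\times T^{2}\times\cdots\times T^{d}$. A direct computation gives $\|S_{*}\nu_{N}(x)-\nu_{N}(x)\|_{TV}\le 2/N$, so every weak-$*$ subsequential limit of $\nu_{N}(x)$ is $S$-invariant. The $(i,j)$-marginal of $\nu_{N}(x)$ ($1\le i<j\le d$) is exactly $\frac{1}{N}\sum_{n=0}^{N-1}(T^{i}\times T^{j})^{n}\delta_{(x,x)}$, which by Lemma~\ref{lem-bougain} converges weakly to $\mu\times\mu$ on a full-measure set $X_{i,j}$. Hence for every $x$ in the full-measure set $X^{*}=\bigcap_{1\le i<j\le d}X_{i,j}$, any weak-$*$ limit point $\nu$ of $\{\nu_{N}(x)\}$ is an $S$-invariant joining of $(X,T),(X,T^{2}),\ldots,(X,T^{d})$ with independent $2$-marginals. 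By Lemma~\ref{lem:PIDn is PID} each $(X,\mathcal{X},\mu,T^{i})$ is PID, and Theorem~\ref{thm:PIDs are JPID} then forces the family to be JPID, so $\nu=\mu^{d}$. As $X^{d}$ is compact metrizable, uniqueness of the subsequential limit upgrades to $\nu_{N}(x)\to\mu^{d}$ weakly, yielding the desired convergence for all $f_{1},\ldots,f_{d}\in C(X)$.

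\smallskip

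To pass from $C(X)$ to $L^{\infty}(X,\mathcal{X},\mu)$, fix $f_{1},\ldots,f_{d}\in L^{\infty}$ with $\|f_{i}\|_{\infty}\le M$, approximate each $f_{i}$ in $L^{1}(\mu)$ by $g_{i}\in C(X)$ with $\|g_{i}\|_{\infty}\le M$, and apply Lemma~\ref{lem-product} to bound
\begin{equation*}
\Bigl|\frac{1}{N}\sum_{n=0}^{N-1}\!\!\prod_{i=1}^{d}f_{i}(T^{in}x)-\frac{1}{N}\sum_{n=0}^{N-1}\!\!\prod_{i=1}^{d}g_{i}(T^{in}x)\Bigr|\le M^{d-1}\sum_{i=1}^{d}\frac{1}{N}\sum_{n=0}^{N-1}\bigl|f_{i}-g_{i}\bigr|(T^{in}x).
\end{equation*}
Since $T$ is weakly mixing each $T^{i}$ is ergodic, so by Birkhoff's theorem the right-hand side converges $\mu$-a.s.\ to $M^{d-1}\sum_{i=1}^{d}\|f_{i}-g_{i}\|_{L^{1}(\mu)}$, which is arbitrarily small. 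A standard diagonal argument concludes the a.s.\ convergence for $L^{\infty}$ functions, the limit being $\prod_{i=1}^{d}\int f_{i}\,d\mu$.

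\smallskip

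The step I expect to be the main obstacle is the identification of the weak-$*$ limit of $\nu_{N}(x)$: verifying that one really gets an $S$-invariant joining with the correct pairwise marginals, and then invoking the JPID property of the family $\{(X,T^{i})\}_{i=1}^{d}$ in the correct form. Everything else is either soft (Jewett--Krieger, weak-$*$ compactness) or is already packaged in the preceding lemmas.
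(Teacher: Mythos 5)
Your proposal is correct and follows essentially the same route as the paper's proof: passage to a uniquely ergodic Jewett--Krieger model, identification of every weak-$*$ limit of the diagonal empirical measures as a pairwise independent joining of $\{(X,\mathcal{X},\mu,T^{i})\}_{i=1}^{d}$ via Lemma~\ref{lem-bougain}, forcing it to be $\mu^{d}$ by Lemma~\ref{lem:PIDn is PID} and Theorem~\ref{thm:PIDs are JPID}, and then the $L^{1}$-approximation/Birkhoff argument with Lemma~\ref{lem-product} to pass from $C(X)$ to $L^{\infty}$. The only cosmetic difference is that you verify $S$-invariance of the limit by a total variation estimate, where the paper instead invokes unique ergodicity of the powers (Lemma~\ref{lem-unique}) to pin down the one-dimensional marginals.
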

\begin{proof}
One may assume that $(X,T)$ is a unique ergodic system by Jewett-Krieger
Theorem \cite[Chapter 15.8]{Glasner}. Let $\sigma_d=T\times T^2 \times \ldots \times T^{d}$.

\medskip

\noindent {\bf Claim: }\ {\em There is some $X_0\in \X$ with $\mu(X_0)=1$ such that for each $x\in X_0$, one has that
\begin{equation*}
\frac {1}{N} \sum_{n=0}^{N-1} \sigma_d^n \d_{\bf x}\longrightarrow
\mu^{d},\ N\to \infty, \quad \text{weakly in $M(X^d)$},
\end{equation*}
where ${\bf x}=(x,x,\ldots,x)\in X^d$ and $\mu^{d}=\mu\times \ldots \times \mu.$
}

\medskip

\begin{proof}[Proof of Claim]

By Lemma \ref{lem-bougain}, for each $1\le i<j\le d$, there is some $X_{i,j}\in \X$ such that for all  $x\in X_{i,j}$
\begin{equation*}
  \frac{1}{N} \sum_{n=0}^{N-1} (T^i\times T^j)^n\d_{(x,x)}\longrightarrow
\mu\times \mu, \ N\to \infty, \quad \text{weakly in $M(X\times X)$.}
\end{equation*}
Let $\displaystyle X_0=\bigcap_{1\le i<j\le d} X_{i,j}$. For any $x\in X_0$ we will show that $\displaystyle\frac {1}{N} \sum_{n=0}^{N-1} \sigma_d^n \d_{\bf x}\longrightarrow \mu^{d},\ N\to \infty$, weakly in $M(X^d)$.

For this aim let $\lambda$ be any weak limit point of the sequence $\{\frac {1}{N} \sum_{n=0}^{N-1} \sigma_d^n \d_{\bf x}\}$ in $M(X^n)$.
To show the claim, it suffices to prove $\lambda=\mu^d$.

\medskip

First we show that $\lambda$ is a joining for $\{(X,T^i)\}_{i=1}^d$. For $j\in \{1,\ldots, d\}$ let
$\nu$ be the projection measure of $\lambda$ on $(X,T^j)$. Then $\nu$ is the weak limit point of
the sequence $\{\frac{1}{N}\sum_{n=0}^{N-1}T^{jn}\d_x\}$. By Lemma \ref{lem-unique}, $(X,T^j,\mu)$ is uniquely ergodic, and hence
\begin{equation*}
 \lim_{N\to \infty}\frac{1}{N}\sum_{n=0}^{N-1}T^{jn}\d_x=\mu, \quad \text{weakly in $M(X)$}.
\end{equation*}
In particular, $\nu=\mu$. Thus $\lambda$ is a joining for $\{(X,\mathcal{X},\mu, T^i)\}_{i=1}^d$.

Now we show that $\lambda$ is pairwise independent. For all $i\neq j \in \{1,2,\ldots,d\}$, the projection on $(X, T^i)\times (X, T^j)$ is a weak limit point of the sequence $\{\frac {1}{N} \sum_{n=0}^{N-1} (T^i\times T^j)^n \d_{(x,x)}\}$ in $M(X^2)$, denoted by $\eta$. Since $x\in X_0\subset X_{i,j}$, one has that
\begin{equation*}
  \frac{1}{N} \sum_{n=0}^{N-1} (T^i\times T^j)^n\d_{(x,x)}\longrightarrow
\mu\times \mu, \ N\to \infty, \quad \text{weakly in $M(X\times X)$.}
\end{equation*}
In particular, $\eta=\mu\times \mu$.

Since $(X,\mathcal{X},\mu,T)$ is PID m.p.t., so is $(X,\mathcal{X},\mu,T^i)$  for any $i\in \mathbb{N}$
by Lemma \ref{lem:PIDn is PID}.
Moreover, $\{(X,\mathcal{X},\mu,T^i)\}_{i=1}^d$ are JPID by Theorem \ref{thm:PIDs are JPID}. Thus $\lambda=\mu^d$. The proof of the Claim is completed.
\end{proof}

To conclude we have shown for all $x\in X_0$, and for all $g_1,\ldots, g_d\in C(X)$
\begin{equation}\label{s4}
\frac{1}{N} \sum_{n=0}^{N-1}
   g_1(T^nx)g_2(T^{2n}x)\ldots g_d(T^{dn}x)
   \longrightarrow  \int g_1d\mu\int g_2d\mu \ldots \int g_dd\mu
\end{equation}
as $N\to \infty$.

\medskip

Now we show that for all $f_1,\ldots, f_d\in L^\infty(\mu)$,
\begin{equation*}
    \frac 1 N\sum_{n=0}^{N-1}f_1(T^nx)f_2(T^{2n}x)\ldots f_d(T^{dn}x)
\end{equation*}
converges $\mu$ a.s.

Without loss of generality, we assume that for all $1\le j\le d$,
$\|f_j\|_\infty\le 1$. For any $\d>0$, choose continuous functions $g_j$ such that
$\|g_j\|_\infty\le 1$ and $\|f_j-g_j\|_{L^1}<\d / d $ for all $1\le j\le
d$.

Since $\|f_j\|_\infty\le 1, \|g_j\|_\infty\le 1$ and $\|f_j-g_j\|_{L^1}<\d / d $, by Lemma \ref{lem-product} we have
\begin{equation*}
\begin{split}
 \left | \frac{1}{N} \sum_{n=0}^{N-1}\prod_{j=1}^{d} f_j(T^{jn}x) -\frac{1}{N} \sum_{n=0}^{N-1}
   \prod_{j=1}^d g_j(T^{jn}x)\right |
&= \left | \frac{1}{N} \sum_{n=0}^{N-1}\Big[ \prod_{j=1}^{d} f_j(T^{jn}x) - \prod_{j=1}^{d}
g_j(T^{jn}x)\Big] \right |\\
&\le  \sum_{j=1}^d \Big[\frac {1}{N} \sum_{n=0}^{N-1}\Big | f_j(T^{jn}x)-g_j(T^{jn}x)\Big |\Big ],
\end{split}
\end{equation*}
and

\begin{equation}\label{s7}
\begin{split}
\left | \int_{X^d}\bigotimes_{j=1}^dg_j d\mu^{d}-
\int_{X^d}\bigotimes_{j=1}^df_jd\mu^{d} \right |
\le \sum_{j=1}^d\int_{X}|g_j-f_j|d\mu \le \d.
\end{split}
\end{equation}

Now by Birkhoff pointwise ergodic theorem we have that for all $1\le j\le d$
\begin{equation*}
    \frac {1}{N} \sum_{n=0}^{N-1}\Big | f_j(T^{jn}x)-g_j(T^{jn}x)\Big | \lra
\|f_j-g_j\|_{L^1}<\d/d, \quad N\to \infty.
\end{equation*}
for $\mu$ a.s.
Hence there is some $X^\d\in \X$ such that $\mu(X^\d)=1$ and for all $x\in X^\d$
\begin{equation}\label{s5}
\begin{split}
\left | \frac{1}{N} \sum_{n=0}^{N-1} \prod_{j=1}^{d} f_j(T^{jn}x) -\frac{1}{N} \sum_{n=0}^{N-1}
   \prod_{j=1}^d g_j(T^{jn}x)\right |
\le &  \sum_{j=1}^d \Big[\frac {1}{N} \sum_{n=0}^{N-1}\Big |
f_j(T^{jn}x)-g_j(T^{jn}x)\Big |\Big ]\\
\lra &  \sum_{j=1}^d \|f_j-g_j\|_{L^1} < \d,\  \quad N\to \infty.
\end{split}
\end{equation}

Now let $x\in X_0\cap X^\d$. By (\ref{s4}),
\begin{equation}\label{s6}
   \frac{1}{N} \sum_{n=0}^{N-1}
   \prod_{j=1}^d g_j(T^{jn}x)\to \int_{X^d}\bigotimes_{j=1}^d g_j\
   d \mu^{d}, N\to \infty.
\end{equation}

So combining (\ref{s7})-(\ref{s6}), we have for all $x\in X_0\cap X^\d$, when $N$ is large enough
\begin{equation*}
   \left |\frac{1}{N} \sum_{n=0}^{N-1}\prod_{j=1}^{d} f_j(T^{jn}x)-\int_{X^d}\bigotimes_{j=1}^df_jd\mu^{(d)}\right |<3\d.
\end{equation*}

Let $\displaystyle X'=X_0\cap \bigcap_{n=1}^\infty X^{\frac 1n}$. Then $\mu(X')=1$ and for all $x\in X'$,
\begin{equation*}
\lim_{N\to \infty} \frac{1}{N} \sum_{n=0}^{N-1}
   f_1(T^nx)f_2(T^{2n}x)\ldots f_d(T^{dn}x)
   = \int_{X} f_1 d\mu \ldots \int_X f_{d} d\mu.
\end{equation*}
The proof is completed.
\end{proof}

As applications of Theorem \ref{thm:PID implies a.e}, one has the following corollaries.

\begin{cor}
Let $(X,\X,\mu,T)$ be a finite-rank  mixing m.p.t. Then for all $d\in\N$ and all $f_1,\ldots,f_d\in L^\infty(X,\X,\mu)$,
\begin{equation*}
    \frac 1 N\sum_{n=0}^{N-1}f_1(T^nx)\ldots
f_d(T^{dn}x)\stackrel{a.s.}{\longrightarrow } \int f_1d\mu\int f_2d\mu \ldots \int f_dd\mu , \quad N\to \infty.
\end{equation*}
\end{cor}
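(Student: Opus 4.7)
The plan is to deduce the corollary directly from the Main Theorem (Theorem \ref{thm:PID implies a.e}) by verifying the two hypotheses it requires: weak mixing and PID.

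First, I would observe that mixing implies weak mixing, so a finite-rank mixing m.p.t. $(X,\X,\mu,T)$ is automatically weakly mixing. Second, by Ryzhikov's Theorem (Theorem \ref{Ryzhikov-thm}), every finite-rank mixing transformation is PID. Thus $(X,\X,\mu,T)$ satisfies both hypotheses of Theorem \ref{thm:PID implies a.e}, and the desired a.s. convergence
\begin{equation*}
\frac{1}{N}\sum_{n=0}^{N-1} f_1(T^n x)\cdots f_d(T^{dn}x) \xrightarrow{a.s.} \int f_1\,d\mu\cdots\int f_d\,d\mu
\end{equation*}
follows immediately for all $d\in\N$ and all $f_1,\ldots,f_d\in L^\infty(X,\X,\mu)$.

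There is essentially no obstacle here: the corollary is a one-line consequence of combining the Main Theorem with the known result of Ryzhikov. All the substantial work was done in establishing Theorem \ref{thm:PID implies a.e}, where the PID hypothesis was used (together with the Jewett--Krieger theorem, Bourgain's double recurrence theorem, and Lemma \ref{lem:PIDn is PID}) to show that for $\mu$-a.e. $x$ the empirical measures along the diagonal orbit $(T^n x, T^{2n} x, \ldots, T^{dn} x)$ equidistribute to $\mu^d$. Once that is in hand, the class of systems to which the Main Theorem applies contains the finite-rank mixing systems by Ryzhikov's result, and the corollary is immediate.
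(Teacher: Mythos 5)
Your proof is correct and matches the paper's argument exactly: the corollary is obtained by combining Ryzhikov's Theorem (finite-rank mixing implies PID) with Theorem \ref{thm:PID implies a.e}, noting that mixing gives weak mixing. Nothing further is needed.
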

\begin{proof} This follows from Theorem \ref{Ryzhikov-thm} and Theorem \ref{thm:PID implies a.e}.
\end{proof}

\begin{cor}
Let $(X,\X,\mu, T)$ be a simple\footnote{$(X,\X,\mu,T)$ is simple if the centralizer of the action $T$ is a group
and for every $k$ each ergodic $k$-joining of $X$ is a POOD (a product of off-diagonals). See \cite{DJR87} for more details.} m.p.t, then for all $d\in\N$ and all $f_1,\ldots,f_d\in L^\infty(X,\X,\mu)$, $\frac 1 N\sum_{n=0}^{N-1}f_1(T^nx)\ldots
f_d(T^{dn}x)$ converges a.s. Moreover, if $(X,T)$ is in addition  weakly mixing, then
\begin{equation*}
    \frac 1 N\sum_{n=0}^{N-1}f_1(T^nx)\ldots
f_d(T^{dn}x)\stackrel{a.s.}{\longrightarrow } \int f_1d\mu\int f_2d\mu \ldots \int f_dd\mu , \quad N\to \infty.
\end{equation*}
\end{cor}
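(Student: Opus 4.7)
The plan is to split into two cases based on whether the simple m.p.t.\ is weakly mixing. The bulk of the technical work lies in the weakly mixing case, where I will show directly that weakly mixing simple systems are PID so that Theorem~\ref{thm:PID implies a.e} delivers both the a.s.\ convergence and the identification of the limit required by the ``moreover'' clause.

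To show that a weakly mixing simple system $(X,\mathcal{X},\mu,T)$ is PID, let $\lambda$ be any pairwise independent $k$-fold self-joining and decompose it into ergodic components $\lambda=\int_\Omega \lambda_\omega\, d\mathbf{P}(\omega)$ under the diagonal action $T^{(k)}$. By simplicity, each $\lambda_\omega$ is a POOD, hence determined by a partition $\mathcal{P}_\omega$ of $\{1,\ldots,k\}$ together with elements of the centralizer $C(T)$ attached to each block. For a fixed pair $i\neq j$, the pairwise projection $\pi_{ij}\lambda_\omega$ is either the product $\mu\times\mu$ (when $i$ and $j$ lie in distinct blocks) or an off-diagonal $\mu_{\psi_\omega}$ (when they lie in the same block); in either case the projected measure is $T\times T$-ergodic, using weak mixing of $T$ for the first case and the fact that $\mu_{\psi_\omega}$ is the push-forward of the ergodic $\mu$ by $x\mapsto (x,\psi_\omega(x))$ for the second. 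Pairwise independence of $\lambda$ yields $\int \pi_{ij}\lambda_\omega\, d\mathbf{P}(\omega)=\mu\times\mu$, so the $T\times T$-ergodic measure $\mu\times\mu$ is represented as an integral of $T\times T$-ergodic measures. Uniqueness of the ergodic decomposition then forces $\pi_{ij}\lambda_\omega=\mu\times\mu$ for $\mathbf{P}$-a.e.\ $\omega$, so $i$ and $j$ lie in distinct blocks of $\mathcal{P}_\omega$ almost surely. Since the pair was arbitrary, $\mathcal{P}_\omega$ is the discrete partition $\mathbf{P}$-a.e., giving $\lambda_\omega=\mu^k$ almost surely and hence $\lambda=\mu^k$.

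For the first statement in the non-weakly-mixing case, I would appeal to the structure theory of simple systems going back to del Junco--Rudolph \cite{DJR87}: a simple m.p.t.\ that is not weakly mixing is isomorphic to an ergodic rotation on a compact abelian group, hence in particular distal. The a.s.\ convergence of multiple ergodic averages for ergodic distal systems is provided by the theorem of Huang--Shao--Ye \cite{HSY}, completing this case.

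The main obstacle I anticipate is the PID argument: the ergodic decomposition must be handled carefully (measurability of $\omega\mapsto \mathcal{P}_\omega$ and the attached centralizer data), and the uniqueness-of-decomposition step requires the explicit POOD form to guarantee ergodicity of the pairwise projections $\pi_{ij}\lambda_\omega$, which is not automatic from the mere ergodicity of $\lambda_\omega$ but follows transparently once the POOD decomposition is in hand.
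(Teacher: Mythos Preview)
Your approach is correct and follows the same dichotomy as the paper: a simple system is either a group system (hence distal, handled by \cite{HSY}) or weakly mixing and PID (handled by Theorem~\ref{thm:PID implies a.e}). The only difference is that the paper simply cites \cite[the argument before Lemma~5.2]{DJR87} for the implication ``weakly mixing simple $\Rightarrow$ PID'', whereas you supply a correct direct proof of that fact via the ergodic decomposition and the POOD structure.
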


\begin{proof}
Since each simple system is either a group system \cite[Theorem 2.3]{DJR87}, or a weakly mixing PID system
(the argument before \cite[Lemma 5.2]{DJR87}), the result follows.
\end{proof}

\begin{cor}[\cite{Assani1998}]
Let $(X,\X,\mu, T)$ be a weakly mixing m.p.t. such that the restriction of $T$ to its Pinsker algebra has spectral type singular w.r.t Lebesgue measure.
Then for all $d\in\N$ and all $f_1,\ldots,f_d\in L^\infty(X,\X,\mu)$,
\begin{equation*}
    \frac 1 N\sum_{n=0}^{N-1}f_1(T^nx)\ldots
f_d(T^{dn}x)\stackrel{a.s.}{\longrightarrow } \int f_1d\mu\int f_2d\mu \ldots \int f_dd\mu , \quad N\to \infty.
\end{equation*}
\end{cor}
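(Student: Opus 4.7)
The plan is to deduce this corollary by combining three results already in hand: the Derrien--Lesigne reduction to the Pinsker factor, Host's Theorem \ref{Host-thm}, and the Main Theorem \ref{thm:PID implies a.e}. I will first pull the problem down to the Pinsker factor $\mathcal{P}$, then verify that the spectral hypothesis forces the PID property there, and finally invoke the main theorem.

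For the reduction step, I will, for each $f_j\in L^\infty(X,\X,\mu)$, write $f_j=g_j+h_j$ with $g_j=\E(f_j\mid\mathcal{P})$ and $h_j=f_j-g_j$, so that $\E(h_j\mid\mathcal{P})=0$. Expanding the product $\prod_{j=1}^d f_j(T^{jn}x)$ as a sum of $2^d$ terms indexed by subsets $S\subseteq\{1,\dots,d\}$, every term except the one with $S=\{1,\dots,d\}$ involves at least one factor with vanishing Pinsker-conditional expectation. The Derrien--Lesigne reduction quoted in the introduction will then ensure that the Ces\`aro average of each such mixed term tends to $0$ a.s., so matters reduce to analyzing the single all-$g$ term.

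For the main step, I will observe that the factor system $(X,\mathcal{P},\mu,T)$ is weakly mixing (weak mixing passes to factors) and has spectral type singular w.r.t.\ Lebesgue by hypothesis. Applying Host's Theorem \ref{Host-thm} to $d$ identical copies of $(X,\mathcal{P},\mu,T)$ shows that every pairwise independent $d$-fold self-joining of this factor is independent; since this holds for every $d\ge 3$, the Pinsker factor is PID. The Main Theorem \ref{thm:PID implies a.e} will then yield
\[
\frac{1}{N}\sum_{n=0}^{N-1} g_1(T^n x)\cdots g_d(T^{dn}x)\stackrel{a.s.}{\longrightarrow}\int g_1\,d\mu\cdots\int g_d\,d\mu,
\]
and combining with the first step (using $\int g_j\,d\mu=\int f_j\,d\mu$) will produce the desired conclusion.

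The only subtlety worth flagging is that the Derrien--Lesigne reduction is quoted in the introduction as a statement about mere \emph{existence} of the a.s.\ limit, whereas the argument above needs the sharper fact that any term containing a factor of vanishing Pinsker-conditional expectation has Ces\`aro average tending to $0$ a.s. This sharper statement is exactly what underlies the existence reduction, so no additional technical work should be needed, but the cited result should be invoked in its sharper form.
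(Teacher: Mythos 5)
Your proposal is correct and takes essentially the same route as the paper, whose proof is precisely the one-line combination of Derrien--Lesigne's reduction to the Pinsker factor, Host's Theorem \ref{Host-thm} (which, applied to $d$ copies of the weakly mixing Pinsker factor with singular spectral type, shows it is PID), and Theorem \ref{thm:PID implies a.e}. Your remark that the sharper form of Derrien--Lesigne (terms containing a factor with vanishing Pinsker-conditional expectation average to $0$ a.s.) is needed to identify the limit is exactly the detail the paper leaves implicit; alternatively one could identify the limit from the mere existence statement by using weak mixing and the $L^2$ convergence of the averages.
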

\begin{proof} This follows from Derrien and Lesigne's theorem \cite{DL1996},  Theorem \ref{Host-thm} and Theorem \ref{thm:PID implies a.e}.
\end{proof}


\begin{cor}\label{cor-4.6}
Let $Y=(X\times_{\sigma}K/H,\mathcal{X},\mu,T)$ be a weakly mixing
isometric extension of a PID action and $f_{1},\ldots,f_{d}\in L^{\infty}(\mu)$,
then $\frac{1}{N}\sum_{n=0}^{N-1}f_{1}(T^{n}y)\cdots f_{d}(T^{dn}y)$
converge a.s.
\end{cor}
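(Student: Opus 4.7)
The plan is to reduce this corollary directly to the Main Theorem (Theorem \ref{thm:PID implies a.e}) via the structural lemma on isometric extensions of PID actions. First I would note that the base action is by hypothesis PID and that $Y = X \times_\sigma K/H$ is by hypothesis a weakly mixing isometric extension of it. This is precisely the setting of Lemma \ref{thm:A-weakly-mixing isometric of PID is PID}, so applying that lemma yields that $Y$ itself is PID.

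Having established that $Y$ is PID, I combine this with the standing hypothesis that $Y$ is weakly mixing to place ourselves in the situation covered by Theorem \ref{thm:PID implies a.e}. Applying that theorem to $Y$ and to $f_{1},\ldots,f_{d}\in L^{\infty}(\mu)$ gives the (a priori stronger) conclusion
\begin{equation*}
\frac{1}{N}\sum_{n=0}^{N-1} f_{1}(T^{n}y)\cdots f_{d}(T^{dn}y) \;\stackrel{a.s.}{\longrightarrow}\; \int f_{1}\, d\mu \cdots \int f_{d}\, d\mu , \quad N\to\infty ,
\end{equation*}
which in particular implies the asserted almost-sure convergence.

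There is no genuine obstacle in this argument: the corollary is simply a synthesis of the two prior results, and its content lies in observing that the hypothesis of the corollary is tailored exactly to trigger Lemma \ref{thm:A-weakly-mixing isometric of PID is PID}, whose conclusion in turn furnishes exactly the hypothesis of Theorem \ref{thm:PID implies a.e}. One may view the corollary as illustrating the scope of the Main Theorem, since as explained earlier in the paper the class of weakly mixing isometric extensions of PID actions covers a number of interesting examples (such as discretizations of classical horocycle flows, via \cite{Rat82a,Rat82b,Rat83} and \cite[p. 8569]{Lem09}).
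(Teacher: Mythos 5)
Your argument is correct and is exactly the paper's proof: apply Lemma \ref{thm:A-weakly-mixing isometric of PID is PID} to conclude that $Y$ is PID, then invoke Theorem \ref{thm:PID implies a.e} using the weak mixing of $Y$, which even identifies the limit as $\int f_{1}\,d\mu\cdots\int f_{d}\,d\mu$. No further comment is needed.
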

\begin{proof} This follows from Theorem \ref{thm:A-weakly-mixing isometric of PID is PID} and Theorem \ref{thm:PID implies a.e}.
\end{proof}

\begin{thm}
Let $(X,\mathcal{X},\mu,T)$ be a weakly mixing measurable
distal extension of a PID system and $f_{1},\ldots, f_{d}\in L^{\infty}(\mu)$,
then $\frac{1}{N}\sum_{n=1}^{N}f_{1}(T^{n}x)\cdots f_{d}(T^{dn}x)$
converge a.s.
\end{thm}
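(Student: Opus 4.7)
The plan is to reduce the statement to Theorem \ref{thm:PID implies a.e} by showing that $(X,\mathcal{X},\mu,T)$ is itself a weakly mixing PID system. The key tool is the Furstenberg--Zimmer structure theorem, which realizes any measurable distal extension as a (possibly transfinite) tower of isometric extensions
$$Y = X_{0} \to X_{1} \to \cdots \to X_{\alpha} \to \cdots \to X_{\eta} = X,$$
in which each $X_{\alpha+1}$ is an isometric extension of $X_{\alpha}$, while at a limit ordinal $\lambda$ the $\sigma$-algebra of $X_{\lambda}$ is generated by $\bigcup_{\alpha<\lambda}\mathcal{X}_{\alpha}$. Since weak mixing passes to factors, every intermediate factor $X_{\alpha}$ inherits weak mixing from $X$.

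I would then argue by transfinite induction that every $X_{\alpha}$ is PID. The base case $X_{0}=Y$ holds by hypothesis. At the successor step, $X_{\alpha+1}$ is a weakly mixing isometric extension of the PID system $X_{\alpha}$, so Lemma \ref{thm:A-weakly-mixing isometric of PID is PID} applies and $X_{\alpha+1}$ is PID. At a limit ordinal $\lambda$, suppose $\rho$ is a pairwise independent $d$-fold self-joining of $X_{\lambda}$. Its projection $\rho_{\alpha}$ onto $(X_{\alpha})^{d}$ is a pairwise independent self-joining for every $\alpha<\lambda$, and hence by the induction hypothesis $\rho_{\alpha}=\mu_{\alpha}^{d}$. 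Since $\rho$ is determined on a generating $\pi$-system by the coherent family $\{\rho_{\alpha}\}_{\alpha<\lambda}$, one deduces $\rho=\mu_{\lambda}^{d}$, and so $X_{\lambda}$ is PID.

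With $X=X_{\eta}$ shown to be a weakly mixing PID system, Theorem \ref{thm:PID implies a.e} immediately yields the desired a.s. convergence, in fact to the product $\int f_{1}\,d\mu\cdots\int f_{d}\,d\mu$. The main obstacle in this plan is the limit-ordinal step of the transfinite induction: one must verify explicitly that pairwise independence, and consequently the PID property, is preserved under the inverse limits appearing in the Furstenberg--Zimmer tower. While this is essentially folklore in joining theory, it deserves careful bookkeeping, and is the only non-trivial ingredient beyond what has already been established in the paper.
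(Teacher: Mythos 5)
Your argument is correct, but it takes a genuinely different route from the paper's. The paper does not show that $X$ is itself PID: it invokes the Furstenberg--Zimmer theorem to present $X$ as an $I$-extension of its PID factor and then transports the \emph{convergence statement} up the tower, proceeding ``as in \cite[Subsection 6.3]{HSY}'' with Corollary \ref{cor-4.6} supplying the isometric steps; this leans on the strictly ergodic model machinery of \cite{HSY}, which is what handles the limit stages, since almost sure convergence does not pass through inverse limits by soft approximation alone. You instead transport the \emph{PID property} up the tower: Lemma \ref{thm:A-weakly-mixing isometric of PID is PID} at successor steps, and at limit steps the observation that a pairwise independent self-joining of the inverse limit projects to pairwise independent self-joinings of all earlier stages, hence agrees with the product measure on the generating $\pi$-system of rectangles whose sides are measurable with respect to some stage (since all marginals equal $\mu$, arbitrary rectangles are approximated by such rectangles), so PID is closed under these inverse limits; a single application of Theorem \ref{thm:PID implies a.e} then finishes. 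Two small points are worth recording to complete your sketch: each successor extension in the tower is ergodic (indeed weakly mixing, being a factor of $X$), so it admits the homogeneous skew-product representation $X_\alpha\times_\sigma K/H$ needed for Lemma \ref{thm:A-weakly-mixing isometric of PID is PID}; and your route actually yields more than the theorem states, namely that $X$ is itself PID and that the averages converge a.s. to $\int f_1\,d\mu\cdots\int f_d\,d\mu$, whereas the paper's argument only asserts convergence (the limit being identified anyway by weak mixing). Your approach avoids the external machinery of \cite{HSY} at the cost of the (standard, but to be verified) bookkeeping you flagged.
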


\begin{proof}
By the Furstenberg-Zimmer structure theorem $X$ is an $I$-extension
over its PID factor (\cite[Theorem 10.8]{Glasner}) . We now proceed as
in \cite[Subsection 6.3]{HSY} by Corollary~ \ref{cor-4.6}.
\end{proof}

\begin{cor}
In the following classes $(\ref{MEA-one})$ exists a.s.:
\begin{itemize}
\item Weakly mixing measurable distal extension of a transformation with spectral type singular w.r.t Lebesgue measure.
\item Weakly mixing measurable distal extension of a transformation with
the $R$-property.
\item Weakly mixing measurable distal extension of finite rank mixing.
\end{itemize}
\end{cor}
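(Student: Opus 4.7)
The plan is to reduce each of the three classes directly to the preceding theorem on weakly mixing measurable distal extensions of PID systems. What needs to be verified in each bullet is that the base of the extension is PID; everything else will be a quotation.

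First I would record a preliminary observation to be used silently in all three cases: if $X$ is weakly mixing and $Y$ is a measure-theoretic factor of $X$, then $Y$ is also weakly mixing, and in particular ergodic. This guarantees that the auxiliary hypotheses (weak mixing, ergodicity) required by the various PID theorems at the level of the base come for free from the weak mixing of the extension $X$.

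For the first bullet the base is weakly mixing with spectral type singular w.r.t.\ Lebesgue measure, so I would apply Theorem \ref{Host-thm} to $d$ copies of the base for every $d\ge 3$: this shows that every pairwise-independent $d$-fold self-joining is the product measure, hence the base is PID. For the third bullet the base is finite-rank mixing and hence PID by Theorem \ref{Ryzhikov-thm}. For the middle bullet the base is an ergodic transformation possessing the $R$-property, and the discussion preceding Lemma \ref{thm:A-weakly-mixing isometric of PID is PID} (see in particular \cite[p.~8569]{Lem09}) yields PID in that setting; ergodicity is again automatic from weak mixing of the extension.

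Having verified that the base is PID in each case, the preceding theorem applies verbatim and yields the a.s.\ convergence of \eqref{MEA-one}. The corollary is thus essentially a packaging of the three PID theorems (Host, Thouvenot--Ratner, Ryzhikov) into a single consequence of the main convergence result. I do not anticipate any substantial obstacle; the only minor point of care is the $R$-property bullet, where ``transformation with the $R$-property'' must be read in the ergodic discretized sense for which PID has been established earlier in the paper.
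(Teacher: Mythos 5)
Your proposal is correct and follows essentially the same route as the paper: verify that the base is PID via Host's theorem (singular spectral type plus weak mixing inherited by the factor), the $R$-property discussion from \cite[p.~8569]{Lem09}, and Ryzhikov's theorem respectively, and then invoke the theorem on weakly mixing measurable distal extensions of PID systems. The paper's one-line proof cites exactly these ingredients, so your write-up is just a more explicit version of the same argument (including the correct observation that weak mixing, hence ergodicity, of the base comes for free from weak mixing of the extension).
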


\begin{proof} This follows from Corollary \ref{cor-4.6},  Theorem \ref{Host-thm}, the discussion after Question \ref{prob: invertible zero-entropy weakly mixing is PID} and Theorem \ref{Ryzhikov-thm}.
\end{proof}

It seems the following has not been observed although this is straightforward
from the fact that a m.p.t. with spectral type singular w.r.t Lebesgue measure is generic
with respect to the weak topology as defined in \cite{Hal56}:

\begin{thm}
For a generic  m.p.t $(\ref{MEA-one})$ exists a.s.
\end{thm}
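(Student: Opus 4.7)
The plan is to combine Halmos's classical genericity result with Host's theorem and our Main Theorem. First, I would invoke \cite{Hal56}: on the Polish group of invertible measure-preserving transformations of a standard Lebesgue space equipped with the weak topology, the set of weakly mixing m.p.t.\ is residual, and the set of m.p.t.\ whose maximal spectral type is singular with respect to Lebesgue measure on $\mathbb{T}$ is also residual. Since a finite intersection of residual sets in a Baire space is residual, the class of weakly mixing m.p.t.\ having purely singular spectrum is itself residual, hence generic in the sense of \cite{Hal56}.

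Next, let $T$ be a generic m.p.t.\ as above. For each $d\ge 3$, I would apply Theorem \ref{Host-thm} with $T_1=T_2=\cdots=T_d=T$: all $d$ of the transformations (hence certainly at least $d-2$ of them) are weakly mixing with spectral type singular w.r.t.\ Lebesgue measure, so every pairwise independent $d$-fold self-joining of $T$ must be the product measure $\mu^d$. This is precisely the PID property.

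Finally, since $T$ is weakly mixing and PID, Theorem \ref{thm:PID implies a.e} applies directly and yields that $(\ref{MEA-one})$ converges $\mu$-a.s.\ to $\prod_{i=1}^d \int f_i\, d\mu$ for all $d\in\mathbb{N}$ and all $f_1,\ldots,f_d\in L^\infty(X,\mathcal{X},\mu)$. The argument poses no serious obstacle: once the appropriate genericity statement is pinned down, the rest is a direct invocation of results already established in the paper. The only non-routine ingredient is Halmos's genericity theorem for the spectral type, which is classical.
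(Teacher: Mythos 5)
Your overall route is the same as the paper's: show that a generic m.p.t.\ is weakly mixing with spectral type singular w.r.t.\ Lebesgue measure, deduce PID from Theorem \ref{Host-thm} applied to $d$-fold self-joinings, and conclude by Theorem \ref{thm:PID implies a.e} (the paper's written proof only records the genericity step and leaves the last two deductions implicit, exactly as you carry them out). The one point to fix is the sourcing of the genericity of singular spectrum: this is \emph{not} in Halmos \cite{Hal56}, which gives genericity of weak mixing but says nothing about residuality of singular maximal spectral type, so your ``the only non-routine ingredient is Halmos's genericity theorem for the spectral type'' rests on a misattribution. The paper supplies the missing argument differently: rigid transformations are generic in the weak topology (\cite[p.~86, Subsection 3]{KS67}, or the proof of \cite[Theorem 3.1]{AS01}), and a rigid transformation always has singular spectral type (\cite[Theorem 5.11]{Glasner}); intersecting with the generic set of weakly mixing transformations then gives the class you need. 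Either substitute this rigidity argument, or cite a reference that actually contains the genericity of singular spectrum (e.g.\ \cite{Stepin} or \cite{JL}); with that repair the rest of your argument goes through verbatim.
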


\begin{proof}
It is well known that the class of weakly mixing transformations is
generic \cite{Hal56}. A rigid transformation is always of singular
type (\cite[Theorem 5.11]{Glasner}.). Finally rigid transformations are
generic (See \cite[p. 86 Subsection 3]{KS67} or the proof of \cite[Theorem 3.1]{AS01}).
\end{proof}


\end{document}